\documentclass[11pt,a4paper]{article}
\usepackage{epsfig}
\usepackage[numbers]{natbib}
\usepackage{amsmath, amsthm, amssymb}
\usepackage{epsfig}
\usepackage[dvipdfm]{hyperref}
\usepackage[margin=3cm, top=3.5cm, bottom=3.5cm]{geometry}

\newcommand\blfootnote[1]{%
  \begingroup
  \renewcommand\thefootnote{}\footnote{#1}%
  \addtocounter{footnote}{-1}%
  \endgroup
}
\newtheorem{theorem}{Theorem}[section]
\newtheorem{lemma}{Lemma}[section]

\newtheorem{remark}{Remark}[section]

\numberwithin{equation}{section}
\everymath{\displaystyle}
\allowdisplaybreaks

\title{Improved Hardy inequality with logarithmic term}
\date{}
\author{
Nikolai Kutev\thanks{Institute of Mathematics and Informatics, Bulgarian Academy of Sciences, 1113, Sofia, Bulgaria}
 \and Tsviatko Rangelov
 \footnotemark[1]
}

\begin{document}

\maketitle
\blfootnote{Corresponding author: T. Rangelov, rangelov@math.bas.bg}

\begin{abstract}
\noindent
New Hardy type inequality with double singular kernel and with additional logarithmic term in  a ball $B\subset \mathbb{R}^n$ is proved. As an application an estimate from below of the first eigenvalue for Dirichlet problem  of p-Laplacian in a bounded domain $\Omega\subset \mathbb{R}^n$ is obtain.
\end{abstract}


{\bf Keywords} Hardy inequalities, First eigenvalue of p-Laplacian.

{\bf Math. Subj. Class.} 26D10, 35P15

\section{Introduction}
\label{sec1}
Classical Hardy inequalities in bounded domain $\Omega\subset \mathbb{R}^n$, $n\geq2$, $p>1$, $p\neq n$  have the form, see Sect. 2.1 in \cite{KR22} and references therein:
\begin{itemize}
\item  with singularity at $0\in\Omega$ and $p>n$
\begin{equation}
\label{eq001}
\int_\Omega|\nabla u|^pdy\geq \left(\frac{p-n}{p}\right)^p\int_\Omega\frac{|u|^p}{|y|^p}dy,\quad u\in W_0^{1,p}(\Omega\backslash\{0\}),
\end{equation}
\item  with singularity on $\partial\Omega$, with $d(y)=\hbox{dist}(y,\partial\Omega)$ and constant $C_\Omega$
\begin{equation}
\label{eq002}
\int_\Omega|\nabla u|^pdy\geq C_\Omega\int_\Omega\frac{|u|^p}{d^p(y)}dy,\quad u\in W_0^{1,p}(\Omega).
\end{equation}
\end{itemize}
There are different improvements of (\ref{eq001}) and (\ref{eq002}) among them are inequalities with double singular kernels in the ball $B_R=\{|y|<R\}\subset \mathbb{R}^n$, $n\geq2$, $\beta\in(1,n)$,   $p>n$ and with additional power term, see \cite{KR19a}
\begin{equation}
\label{eq003}
\begin{array}{ll}
\int_{B_R}|\nabla u|^pdy
&\geq\left(\frac{p-\beta}{p}\right)^p\int_{B_R}\frac{|u|^p}{|y|^{\frac{(\beta-1)p}{p-1}}\phi(|y|)^p}dy
\\
&+(n-\beta)\left(\frac{p-\beta}{p}\right)^{p-1}\int_{B_R}\frac{|u|^p}{|y|^{\beta}\phi(|y|)^{p-1}}dy,
\end{array}
\end{equation}
for $u\in W_0^{1,p}(B_R\backslash\{0\})$,
where
\begin{equation}
\label{eq004}
\phi(|y|)=\left(R^{\frac{p-\beta}{p-1}}-|y|^{\frac{p-\beta}{p-1}}\right).
\end{equation}
The aim of this paper is to prove one parametric family of new Hardy inequalities with double singular kernels and additional logarithmic term in the ball $B_R$. For this purpose we solve explicitly the Poisson problem for the p-Laplacian in $B_R$ with special right-hand side $h(|y|)$ for every $p>1$. By means of the solutions to the Poisson problem, when the right-hand side $h(|y|)$ is one parametric family of functions singular at the origin, we prove Hardy inequalities with kernels singular at the origin and at the boundary $\{|y|=R\}$. The additional logarithmic term is obtain from a nonlinear ordinary differential inequality of first order, see \cite{BFT03a}, \cite{BFT03b}, \cite{FKR15}.
The following theorem is proved:
\begin{theorem}
\label{th1}
If $n\geq2$, $p>n$, $\beta\in(1,n)$,  $b<-\frac{p-2}{6(p-1)}$, $x_0=\frac{1-\sqrt{1+4|b|}}{-2|b|}$, then for every $u\in W_0^{1,p}(B_R\backslash\{0\})$ Hardy type inequality
\begin{equation}
\label{eq005}
\begin{array}{ll}
\int_{B_R}|\nabla u|^pdy&\geq\left|\frac{p-\beta}{p}\right|^p\int_{B_R}\frac{|u|^p}{|y|^{\frac{(\beta-1)p}{p-1}}
\phi(|y|)^{p-1}}
\\[1pt]
&\times\left(1+\frac{p}{2(p-1)}\frac{1}{\left(1+\ln\frac{\phi(|y|)}{e^{1/x_0}}\right)^2}\right)dy
\\[1pt]
\\
&+\left|\frac{p-\beta}{p}\right|^{p-1}
(n-\beta)\int_{B_R}\frac{|u|^p}{|y|^{\beta}
\phi(|y|)^{p-1}}
\\[1pt]
\\
&\times\left(1-\frac{1}{\ln\frac{\phi(|y|)}{e^{1/x_0}}}-\frac{|b|}{\ln^2\frac{\phi(|y|)}{e^{1/x_0}}}\right)dy.
\end{array}
\end{equation}
holds.
\end{theorem}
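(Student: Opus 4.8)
\noindent\emph{A possible line of proof.} The natural route is the supersolution (convexity) method. For any positive radial $\psi$ and any $u\in C_0^\infty(B_R\setminus\{0\})$, the elementary convexity inequality $|\xi|^p\ge|\eta|^p+p|\eta|^{p-2}\eta\cdot(\xi-\eta)$, applied with $\xi=\nabla u$ and $\eta=\frac{u}{\psi}\nabla\psi$, combined with the integration by parts of $\mathrm{div}\big(|u|^p|\nabla\psi|^{p-2}\nabla\psi\,\psi^{1-p}\big)$ (whose boundary contribution vanishes for compactly supported $u$), yields
\[
\int_{B_R}|\nabla u|^p\,dy\ \ge\ \int_{B_R}\frac{-\Delta_p\psi}{\psi^{p-1}}\,|u|^p\,dy .
\]
Hence it suffices to construct a positive radial $\psi$ whose induced potential $V_\psi:=-\Delta_p\psi/\psi^{p-1}$ dominates pointwise the potential on the right-hand side of (\ref{eq005}). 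I would obtain $\psi$ as the explicit solution of the Poisson problem $-\Delta_p\psi=h(|y|)$, $\psi|_{\partial B_R}=0$; integrating the radial equation twice gives $\psi(r)=\int_r^R\big(\rho^{1-n}\int_0^\rho t^{n-1}h(t)\,dt\big)^{1/(p-1)}\,d\rho$, so that $V_\psi=h/\psi^{p-1}$, and the Dirichlet condition forces $\psi\to0$ as $r\to R$, which is what makes $V_\psi$ singular on $\{|y|=R\}$ as well as at the origin.

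Next I would pick $h$ from the one-parameter family of origin-singular right-hand sides so that the leading part of $\psi$ behaves like $\phi(|y|)^{(p-1)/p}$. A direct computation of $\Delta_p\big(\phi^{(p-1)/p}\big)$, using $\phi'=-\tfrac{p-\beta}{p-1}|y|^{\frac{p-\beta}{p-1}-1}$, reproduces precisely the two double-singular kernels of (\ref{eq005}) with the constants $\big|\tfrac{p-\beta}{p}\big|^{p}$ and $(n-\beta)\big|\tfrac{p-\beta}{p}\big|^{p-1}$; this is the log-free base inequality that (\ref{eq005}) refines.

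The logarithmic gain I would extract through the first-order differential mechanism announced in the introduction. Perturbing $\psi$ by a slowly varying factor depending on $L:=\ln\frac{\phi(|y|)}{e^{1/x_0}}=\ln\phi-\tfrac{1}{x_0}$ and passing to $x:=1/L$, one has $\dot x=-x^2$ with respect to $\ln\phi$, so the governing condition becomes an autonomous Riccati-type inequality. The admissible correction to the main kernel is then exactly $1-x-|b|x^2$, and, since $x_0$ is the positive root of $|b|x^2+x-1=0$, this factor stays positive precisely for $x<x_0$, i.e. on the whole admissible range of $L$; this pins down the value of $x_0$ in the statement. Expanding the fractional nonlinearity $(1-x-|b|x^2)^{p/(p-1)}$ about $x=0$ produces the constants of (\ref{eq005}): the quadratic term contributes $\tfrac{p}{2(p-1)}$ in the first correction factor, while the cubic term has coefficient of size $\tfrac{p-2}{6(p-1)}$, which must be absorbed into $|b|$. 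This is exactly where the hypothesis $b<-\tfrac{p-2}{6(p-1)}$ is used, to force the remainder to have the sign needed for $V_\psi\ge V_{(\ref{eq005})}$.

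The main obstacle will be to verify this first-order nonlinear inequality globally on $(0,R)$ and not merely to leading order in $1/L$, because the two correction factors in (\ref{eq005}) are coupled through the single comparison function $\psi$ and cannot be prescribed independently. Controlling the third-order Taylor remainder of the fractional power uniformly, keeping both correction factors positive, and ensuring that $r\mapsto V_\psi(r)-V_{(\ref{eq005})}(r)$ is nonnegative across the entire ball (where near $\{|y|=R\}$ the most singular kernel must dominate and near $0$ the other one must) is the delicate point, and the bound on $b$ together with the exact value of $x_0$ is calibrated to close it. Once the pointwise domination $V_\psi\ge V_{(\ref{eq005})}$ is in hand, the convexity estimate of the first paragraph gives (\ref{eq005}) for smooth compactly supported $u$, and a routine density argument extends it to all $u\in W_0^{1,p}(B_R\setminus\{0\})$.
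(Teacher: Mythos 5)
Your proposal is essentially the paper's own proof: the paper likewise solves the Poisson problem $-\Delta_p\phi=\frac{p-\beta}{p-1}(n-\beta)|y|^{-\beta}$ explicitly to get $\phi$ (Lemma \ref{lem1}), inserts the same slowly varying logarithmic correction $w=\left(\frac{1}{p'}\right)^{p-1}(1-x+bx^2)$ with $x=\frac{1}{1+\ln S-\ln\phi}$, and closes the argument through exactly the autonomous Riccati-type inequality $-w'+(p-1)w-(p-1)w^{p'}\geq G$ that you describe, proved by a third-order Taylor expansion in which $b<-\frac{p-2}{6(p-1)}$ fixes the sign of the cubic term and $x_0$ is the positive root of $|b|x^2+x-1=0$ (Lemma \ref{lem2}). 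The only difference is packaging — you use the Picone/supersolution estimate while the paper uses the equivalent vector-field formulation $g=\frac{|\nabla\phi|^{p-2}\nabla\phi}{|\phi|^{p-2}\phi}\,w(\ln\phi)$ together with a cited theorem of \cite{FKR15} and Young's inequality — and the global-in-$r$ verification you flag as the main obstacle is automatic in this formulation, because the potential splits exactly into the part controlled by the autonomous ODE inequality and the part $\frac{h}{|\phi|^{p-1}}w\geq 0$, so no further pointwise comparison across the ball is required.
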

Note that the inequality (\ref{eq005}) improves the inequality (\ref{eq003}).

As an application of the new Hardy inequalities (\ref{eq005}) we improve the analytical estimate from below of the first eigenvalue $\lambda_{p,n}(\Omega)$ for the p-Laplacian in a bounded smooth domain $\Omega\subset \mathbb{R}^n$, $n\geq2$,  $p>n$. Firstly, this estimate is obtained from the new Hardy inequalities in the ball $B_R$ and then by the Faber-Krahn inequality is extended to an arbitrary bounded smooth domain $\Omega$, see \cite{LW97}, \cite{BK02}.

The analytical formulae for the first eigenvalue $\lambda_{p,n}(\Omega)$ are obtained  only:
 \begin{itemize}
 \item for $n=1$ in \cite{Ot84}: $\lambda_{p,1}(-1,1)=(p-1)\left(\frac{\pi}{p\sin\frac{\pi}{p}}\right)^p$,
 \item for $n\geq2$ and  $p=2$, in special domains like a ball, spherical shell and parallelepiped, for example in a ball $B_R$, see \cite{Vl71}:  $\lambda_{2,n}(B_R)=\left(\frac{\mu_1^{(\alpha)}}{R}\right)^2, \
\ \ \alpha=\frac{n}{2}-1$, $\mu_1^{(\alpha)}$ is the first positive zero of the Bessel
function $J_{\alpha}$.
\end{itemize}
For arbitrary $p>2$ and $n\geq2$ there are different methods in order to estimate $\lambda_{p,n}(\Omega)$ from below, see \cite{KR22}. Some of them are by Cheeger`constant  \cite{KF03}, \cite{Ch70}, with Picone identity  \cite{BD12}, \cite{BD13}, by means of the Sobolev constant \cite{LXZ11}, \cite{Ma85}, estimates in a parallelepiped \cite{Li95} and by Hardy inequalities with double singular kernels \cite{KR22}, Chap. 8, for $n\geq2, p>1, p\neq n$
\begin{equation}
\label{eq006}
\lambda_{p,n}(B_R)\geq
\frac{1}{R^p}\left(\frac{1}{p}\right)^p\left[\frac{(p-1)^{p-1}}{(n-1)^{n-1}}\right]^{\frac{p}{p-n}}.
\end{equation}
The comparison of all these estimates from below of $\lambda_{p,n}(\Omega)$ for arbitrary $p>1$ and $n\geq2$ are shown in \cite{KR22}, Sect. 8.3. For small $p>1$,close to $1$ and $2\leq n\leq9$ better estimates are given by means of the Cheeger`s constant and the Picone identity. Unfortunately, the numerical calculations for $\lambda_{p,n}(\Omega)$ in \cite{BBEM12} are quite better then the analytical estimates. This motivates us to improve the well-known analytical estimates, especially (\ref{eq006})  by means of the new Hardy inequalities (\ref{eq005}) with double singular kernel and additional logarithmic term.

The plan of the paper is the following. In Sect. \ref{sec2} we prove some preliminary results: Lemma \ref{lem1} for the one parametric radial solution of the Poisson problem for p-Laplacian and Lemma \ref{lem2} for estimate of solutions of first order non-linear ordinary differential inequality. In Sect.\ref{sec3} using the results in Sect. \ref{sec2} and the method derived in Section 3.1 of \cite{KR22} we prove Theorem \ref{th1}. Finally, in Sect. \ref{sec4} is derived a new estimate of $\lambda_{p,n}$.

\section{Preliminary remarks}
\label{sec2}

For $n\geq2$, $p>1$ we consider the Poisson problem in $B_R$
\begin{equation}
\label{eq1} \left\{\begin{array}{l} -\hbox{div}(|\nabla
\phi|^{p-2}\nabla \phi)=h(|y|) \ \ \hbox{ in } B_R,
\\[1pt]
\\
\phi=0 \ \ \hbox{ on } \partial B_R,
\end{array}\right.
\end{equation}
where
\begin{equation}
\label{eq2}
\int_0^R s^{n-1}h(s)ds<\infty.
\end{equation}

We will use the following simple Lemma
\begin{lemma}[\cite{BBEM12}]
\label{lem1}
A solution of (\ref{eq1}),  (\ref{eq2}) is given by
\begin{equation}
\label{eq3}
\phi(|y|)=\int_{|y|}^R\theta^{\frac{1-n}{p-1}}\left(\int_0^\theta s^{n-1}h(s)ds\right)^{\frac{1}{p-1}}d\theta.
\end{equation}
\end{lemma}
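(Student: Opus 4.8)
The plan is to reduce the problem (\ref{eq1}) to a first-order ordinary differential equation by exploiting the radial symmetry, integrate it explicitly, and then verify the resulting formula directly. For a radial function $\phi=\phi(r)$ with $r=|y|$ one has $\nabla\phi=\phi'(r)\frac{y}{r}$, so $|\nabla\phi|=|\phi'(r)|$, and the $p$-Laplacian takes the form
$$\hbox{div}(|\nabla\phi|^{p-2}\nabla\phi)=\frac{1}{r^{n-1}}\frac{d}{dr}\Big(r^{n-1}|\phi'|^{p-2}\phi'\Big).$$
Hence (\ref{eq1}) is equivalent to the ODE
$$-\frac{d}{dr}\Big(r^{n-1}|\phi'|^{p-2}\phi'\Big)=r^{n-1}h(r),\qquad \phi(R)=0.$$

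Next I would integrate this relation over $[0,r]$. The integrability hypothesis (\ref{eq2}) guarantees that $H(r):=\int_0^r s^{n-1}h(s)\,ds$ is finite and that the flux term $r^{n-1}|\phi'|^{p-2}\phi'$ vanishes as $r\to0$, so that $-r^{n-1}|\phi'|^{p-2}\phi'=H(r)$. Since $H(r)\ge0$, the solution is monotone decreasing, $\phi'\le0$, and solving for $\phi'$ yields
$$\phi'(r)=-r^{\frac{1-n}{p-1}}H(r)^{\frac{1}{p-1}}.$$
A final integration from $r$ to $R$, together with the boundary condition $\phi(R)=0$, produces exactly the formula (\ref{eq3}).

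The cleanest and most rigorous route, however, is the reverse verification: starting from (\ref{eq3}), differentiate under the integral sign to recover $\phi'(r)=-r^{\frac{1-n}{p-1}}H(r)^{\frac{1}{p-1}}$, and then compute directly
$$r^{n-1}|\phi'|^{p-2}\phi'=-r^{n-1}\Big(r^{\frac{1-n}{p-1}}H(r)^{\frac{1}{p-1}}\Big)^{p-1}=-r^{n-1}\cdot r^{1-n}H(r)=-H(r),$$
where the exponents collapse because $(p-1)\cdot\frac{1-n}{p-1}=1-n$ and $(p-1)\cdot\frac{1}{p-1}=1$. Differentiating once more gives $-\frac{d}{dr}(-H(r))=r^{n-1}h(r)$, which is the radial equation, while the boundary condition is immediate since $\phi(R)=\int_R^R(\cdots)\,d\theta=0$.

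The main obstacle is a matter of careful bookkeeping rather than of depth: one must track the sign of $\phi'$ in order to treat the nonlinearity $|\phi'|^{p-2}\phi'$ correctly, and the representation (\ref{eq3}) tacitly requires $H(r)\ge0$ so that the $(p-1)$-th root is well defined for non-integer $p$. One must also justify the vanishing of the boundary flux at the origin, which is precisely where the finiteness condition (\ref{eq2}) enters the argument.
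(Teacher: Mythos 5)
Your proof takes essentially the same route as the paper's: reduce to the radial boundary value problem $-(r^{n-1}|\phi'|^{p-2}\phi')'=r^{n-1}h(r)$, $\phi(R)=0$, and integrate twice, which is exactly the paper's one-line argument. Your additional bookkeeping (the vanishing flux at the origin, the sign of $\phi'$, the requirement $H(r)\geq0$, and the direct verification of formula (\ref{eq3})) correctly fills in details the paper leaves implicit, and is sound.
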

\begin{proof}
From the invariance of the equation in (\ref{eq1})  under rotation, the solution of problem (\ref{eq1}) is radially symmetric  and satisfies the boundary value problem for ordinary differential equation
\begin{equation}
\label{eq4} \left\{\begin{array}{l}-(r^{n-1}|\phi'|^{p-2}\phi')'=r^{n-1}h(r), \ \ 0<r<R,
\\[1pt]
\\
\phi(R)=0.
\end{array}\right.
\end{equation}
Integrating twice the equation in (\ref{eq4}) and applying boundary condition we obtain (\ref{eq3}).
\end{proof}

In order to obtain new  Hardy inequality (\ref{eq005}) let us prove the following auxiliary lemma, see also \cite{BFT03a}, \cite{BFT03b}, \cite{FKR15a}.
\begin{lemma}
\label{lem2}
For every $p\geq2$, there exists $b=b(p)<0$ and $S_0>0$ such that for every $S>S_0$  the function
\begin{equation}
\label{eq5}
w(t)=\left(\frac{1}{p'}\right)^{p-1}\left(1-\frac{1}{1+\ln S-t}+\frac{b}{(1+\ln S-t)^2}\right),
\end{equation}
satisfies
\begin{equation}
\label{eq6}
w(t)\in C^1(-\infty,0), \ \ w>0, \ \ w'<0, \ \ w(-\infty)=\left(\frac{1}{p'}\right)^{p-1},
\end{equation}
and is  a solution of the inequality
\begin{equation}
\label{eq7}
-w'+(p-1)w-(p-1)w^{p'}\geq G(t),
\end{equation}
where
\begin{equation}
\label{eq007}
G(t)=\left(\frac{1}{p'}\right)^p\left(1+\frac{p}{2(p-1)}\frac{1}{(1+\ln S-t)^2}\right).
\end{equation}
\end{lemma}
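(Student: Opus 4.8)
The plan is to reduce the inequality to an elementary one by a change of variable that exhibits an exact cancellation of the low-order terms. Throughout write $p'=p/(p-1)$, so that $1/p'=(p-1)/p$, $(p-1)p'=p$ and $(p-1)(p'-1)=1$, and set $c:=(1/p')^{p-1}$, noting $c^{p'}=(1/p')^{p}$. Introduce
\[
L:=1+\ln S-t,
\]
so that $t\in(-\infty,0)$ corresponds to $L\in(1+\ln S,\infty)$ and $dL/dt=-1$. In these terms $w=c(1-L^{-1}+bL^{-2})$, and the qualitative properties (\ref{eq6}) are immediate: $w$ is smooth wherever $L>0$, which holds on the whole interval once $S$ is large; differentiating gives $w'=cL^{-3}(2b-L)$, which is negative for every $L>0$ because $b<0$; and $w(-\infty)=c=(1/p')^{p-1}$. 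Positivity of $w$ is equivalent to $L^{2}-L+b>0$, i.e.\ to $L$ exceeding the larger root $\tfrac12(1+\sqrt{1-4b})=1/x_0$ of that quadratic; this holds on the whole interval provided $1+\ln S>1/x_0$, the first constraint on $S_0$.

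For the differential inequality (\ref{eq7}) I would substitute $w$ into the left-hand side and expand in powers of $1/L$. The only nonlinear term is $w^{p'}=c^{p'}(1+\eta)^{p'}$ with $\eta=-L^{-1}+bL^{-2}$; using Taylor's formula
\[
(1+\eta)^{p'}=1+p'\eta+\tfrac{p'(p'-1)}{2}\eta^{2}+\tfrac{p'(p'-1)(p'-2)}{6}\eta^{3}+\cdots,
\]
together with $-w'=c(L^{-2}-2bL^{-3})$ and $(p-1)w$. The crucial structural fact, verified by collecting coefficients, is that the identities $(p-1)p'=p$, $(p-1)(p'-1)=1$ and $c^{p'}=c/p'$ force an \emph{exact} cancellation: the $L^{0}$ coefficient of the left-hand side equals $c(p-1)/p$, the $L^{-1}$ coefficient vanishes, and the $L^{-2}$ coefficient equals $c/2$ — precisely the two coefficients appearing in
\[
G(t)=\frac{c(p-1)}{p}+\frac{c}{2}\,L^{-2}.
\]
Hence the difference $\mathrm{LHS}-G$ carries no term of order $L^{0}$, $L^{-1}$ or $L^{-2}$, and its leading contribution is of order $L^{-3}$.

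Computing that $L^{-3}$ coefficient — combining the $-2b$ term of $-w'$, the $-2b$ part of $\eta^{2}$, and the cubic Taylor term, and using $p'-2=-\tfrac{p-2}{p-1}$ — I expect
\[
\mathrm{LHS}-G=c\left(-b-\frac{p-2}{6(p-1)}\right)L^{-3}+O(L^{-4}).
\]
This identifies the admissible parameter range: the bracket is positive exactly when $b<-\tfrac{p-2}{6(p-1)}$ (for $p=2$ any $b<0$ works), which is the origin of the hypothesis $b<-\tfrac{p-2}{6(p-1)}$ in Theorem \ref{th1}; one then fixes any such $b=b(p)<0$. Since $p'\in(1,2]$, the remainder beyond the cubic term in $(1+\eta)^{p'}$ is bounded by $CL^{-4}$ uniformly for $L$ bounded below, so there is $L_{*}$ with $\mathrm{LHS}-G\ge\tfrac12 c\big(-b-\tfrac{p-2}{6(p-1)}\big)L^{-3}>0$ for all $L\ge L_{*}$. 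Choosing $S_0$ so that $1+\ln S_0\ge\max\{L_{*},\,1/x_0\}$ then yields (\ref{eq6})–(\ref{eq7}) for every $S>S_0$ and all $t\in(-\infty,0)$.

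The main obstacle is precisely this last remainder estimate. Because the cancellation annihilates every term through order $L^{-2}$, the validity of the inequality is decided entirely by the subleading $L^{-3}$ behaviour; one therefore cannot argue by asymptotics alone but must retain the third-order Taylor term with an honest, sign-aware remainder bound for the non-integer power $p'$ (Lagrange or integral form), uniform over the relevant range of $\eta$. Keeping track of the exact $L^{-3}$ coefficient is what pins down the sharp threshold $-\tfrac{p-2}{6(p-1)}$ for $b$, so care at this step is essential rather than cosmetic.
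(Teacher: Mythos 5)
Your proposal is correct and follows essentially the same route as the paper: the substitution $L=1+\ln S-t$ is just the reciprocal of the paper's variable $x=1/(1+\ln S-t)$, and both proofs Taylor-expand $w^{p'}$ to third order, observe that the terms through order $L^{-2}$ reproduce $G$ exactly, read off the threshold $b<-\tfrac{p-2}{6(p-1)}$ from the sign of the $L^{-3}$ coefficient, and take $S_0$ large enough for positivity of $w$. If anything, your treatment is slightly more careful than the paper's, since you insist on a uniform Lagrange-type remainder bound and an explicit $L_*$ where the paper writes only $o(x^3)$, and your $L^{-3}$ coefficient $c\bigl(-b-\tfrac{p-2}{6(p-1)}\bigr)$ is the correct one (the paper's stated factor $p$ should be $\tfrac{p}{p-1}$, an immaterial slip).
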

\begin{proof}
Let us denote for simplicity $x(t)=\frac{1}{1+\ln S-t}$, so that
$$
w(t)=\left(\frac{1}{p'}\right)^{p-1}(1-x+bx^2), \ \ w'(t)=\left(\frac{1}{p'}\right)^{p-1}(-x^2+2bx^3)
$$
Expanding $w^{p'}(x)$ for small $x$ near $x=0$ in a Taylor polynomial up to the third order we obtain
$$
\begin{array}{lll}
w^{p'}&=&\left(\frac{1}{p'}\right)^p\left\{1-\frac{p}{p-1}x+\frac{p}{p-1}\left(2b+\frac{1}{p-1}\right)
\frac{x^2}{2}\right.
\\[1pt]
\\
&+&\left.\frac{p}{p-1}\left[-\frac{6b}{p-1}+\frac{p-2}{(p-1)^2}\right]\frac{x^3}{6}+o(x^3)\right\}
\end{array}
$$
Then if
$$
b<-\frac{p-2}{6(p-1)}
$$
we get
$$
\begin{array}{lll}
&&-w'+(p-1)w-(p-1)w^{p'}
\\[1pt]
\\
&&=\left(\frac{1}{p'}\right)^p\left[1+\frac{p}{2(p-1)}x^2+p\left(-b-\frac{p-2}{6(p-1)}\right)x^3+o(x^3)\right]
\\[1pt]
\\
&&\geq\left(\frac{1}{p'}\right)^p\left(1+\frac{p}{2(p-1)}x^2\right)
\end{array}
$$

With this choice of $b$ inequalities (\ref{eq7}) and $w'(s)<0$ hold. In order to satisfy the rest of  conditions (\ref{eq6}) we  choose $S$ such that $w(t)>0$, i.e.,  $1-x-|b|x^2>0$. This means that
$$
0<x<x_0=\frac{1-\sqrt{1+4|b|}}{-2|b|}.
$$
If $S_0$ is such that $S_0>e^{\frac{1}{x_0}-1}$ then for every $S>S_0$ we get $w(t)>0$.
\end{proof}

\section{Main result}
\label{sec3}
Let us define the vector function
$g=\frac{|\nabla\phi|^{p-2}\nabla\phi}{|\phi|^{p-2}\phi}w(\ln\phi)$ where $w$ is defined in (\ref{eq5}), Lemma \ref{lem2} and $\phi$ is defined in (\ref{eq004})

Simple calculations give us that $g\in C^1(B_R\backslash\{0\})$ satisfies the equality

 $$
\begin{array}{lll}
-\hbox{div}g&=&-\left(\frac{\Delta_p\phi}{|\phi|^{p-1}}-(p-1)
\left|\frac{\nabla\phi}{\phi}\right|^p\right)w(\ln\phi)-\left|\frac{\nabla\phi}{\phi}\right|^p
w'(\ln\phi)
\\[1pt]
\\
&=&\left|\frac{\nabla\phi}{\phi}\right|^p\left[-w'+(p-1)w-(p-1)w^{p'}\right]
+(p-1)\left|\frac{\nabla\phi}{\phi}\right|^pw^{p'}-\frac{\Delta_p\phi}{|\phi|^{p-1}}w,
\end{array}
$$
and the inequality
\begin{equation}
\label{eq101}
-\hbox{div}g\geq(p-1)|g|^{p'}+v,
\end{equation}
holds,
where $v=\left|\frac{\nabla\phi}{\phi}\right|^pG(\ln\phi)+\frac{h(|y|)}{|\phi|^{p-1}}w(\ln\phi)$, $v\in C^1(B_R\backslash\{0\})$. Here $G$ is defined in (\ref{eq007}), Lemma \ref{lem2} and function $h$ satisfies (\ref{eq2}).

By means of the notations
\begin{equation}
\label{eq9}
\begin{array}{l}
L(u)=\int_{B_R}\left|\frac{\langle\nabla\phi,
\nabla u\rangle}{|\nabla\phi|}\right|^pdy\geq \int_{B_R}|\nabla u|^pdy,
\\[1pt]
\\
K(u)=\displaystyle\int_{B_R}\left|\frac{\nabla \phi}{\phi}\right|^p|u|^pdy,
\ \ N(u)=\displaystyle\int_{B_R}v|u|^pdy,
\end{array}
\end{equation}
We will formulate the following Lemma
\begin{lemma}
\label{lem3}
If $n\geq2$, $p>1$, $p\neq n$ and  (\ref{eq101}) is satisfied then the following Hardy inequality holds
\begin{equation}
\label{eq10} L(u)\geq N(u), \ \ u\in C^{\infty}_0(B_R)
\end{equation}
\end{lemma}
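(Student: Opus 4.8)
The plan is to use the vector-field (supersolution) method already announced in the introduction, following Section 3.1 of \cite{KR22}: integrate the pointwise differential inequality (\ref{eq101}) against the nonnegative weight $|u|^p$, transfer the divergence onto $u$ by integration by parts, and close the estimate with Young's inequality so that the term $(p-1)|g|^{p'}$ is absorbed exactly. Concretely, since $|u|^p\geq0$, multiplying (\ref{eq101}) by $|u|^p$ and integrating over $B_R$ gives
\begin{equation*}
N(u)=\int_{B_R}v\,|u|^p\,dy\leq\int_{B_R}(-\hbox{div}\,g)|u|^p\,dy-(p-1)\int_{B_R}|g|^{p'}|u|^p\,dy,
\end{equation*}
so it suffices to bound the right-hand side by $L(u)$.

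First I would justify the integration by parts in the first integral. Because $g\in C^1(B_R\backslash\{0\})$ is singular at the origin, I excise a small ball $B_\varepsilon(0)$ and integrate by parts on $B_R\backslash B_\varepsilon$; on $\partial B_R$ there is no contribution since $u$ has compact support. This yields
\begin{equation*}
\int_{B_R\backslash B_\varepsilon}(-\hbox{div}\,g)|u|^p\,dy=p\int_{B_R\backslash B_\varepsilon}|u|^{p-2}u\,\langle g,\nabla u\rangle\,dy+R_\varepsilon,\qquad |R_\varepsilon|\leq\int_{\partial B_\varepsilon}|g|\,|u|^p\,dS.
\end{equation*}
For the explicit $\phi$ from (\ref{eq004}) one computes $|g|=\frac{|\nabla\phi|^{p-1}}{\phi^{p-1}}\,w(\ln\phi)\sim C\,|y|^{1-\beta}$ near the origin, while $|u|^p$ stays bounded and $|\partial B_\varepsilon|\sim\varepsilon^{n-1}$, so $|R_\varepsilon|=O(\varepsilon^{n-\beta})$. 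Since $\beta<n$ this flux vanishes as $\varepsilon\to0$, and we obtain $\int_{B_R}(-\hbox{div}\,g)|u|^p\,dy=p\int_{B_R}|u|^{p-2}u\,\langle g,\nabla u\rangle\,dy$.

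Next I would close the estimate with Young's inequality. Because $\phi>0$ and $w>0$ in $B_R$, the field $g$ is a positive scalar multiple of $\nabla\phi$, hence $\langle g,\nabla u\rangle=|g|\,\frac{\langle\nabla\phi,\nabla u\rangle}{|\nabla\phi|}$. Setting $X=|u|^{p-2}u\,|g|$ and $Y=\frac{\langle\nabla\phi,\nabla u\rangle}{|\nabla\phi|}$ and applying $p\,XY\leq(p-1)|X|^{p'}+|Y|^p$, together with the identity $(p-1)p'=p$ which gives $|X|^{p'}=|u|^p|g|^{p'}$, produces the pointwise bound
\begin{equation*}
p\,|u|^{p-2}u\,\langle g,\nabla u\rangle-(p-1)|g|^{p'}|u|^p\leq\left|\frac{\langle\nabla\phi,\nabla u\rangle}{|\nabla\phi|}\right|^p.
\end{equation*}
Integrating over $B_R$ and recalling the definition of $L(u)$ in (\ref{eq9}) shows that the right-hand side of the first display is at most $L(u)$, whence $N(u)\leq L(u)$.

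The hard part will be the rigorous justification of the integration by parts across the singularity at the origin, namely controlling the flux of $g$ over the shrinking sphere $\partial B_\varepsilon$; this is exactly where the growth rate $|g|\sim|y|^{1-\beta}$ and the structural condition $\beta<n$ enter, and it is the only genuinely delicate point for $u\in C^\infty_0(B_R)$, which need not vanish at $0$. The Young step is then routine, provided one keeps track of the correct conjugacy relation $(p-1)p'=p$ and of the alignment $g\parallel\nabla\phi$ with the right sign, guaranteed by $\phi>0$ and $w>0$.
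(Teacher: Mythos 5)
Your argument is correct, but it takes a genuinely different route from the paper's. The paper never integrates by parts itself: it invokes Theorem 1 of \cite{FKR15}, whose hypotheses are exactly the properties of $g$ encoded in (\ref{eq101}), to get the \emph{nonlinear} inequality
\begin{equation*}
L(u)\;\geq\;\left(\frac{1}{p}\right)^p\frac{\bigl[(p-1)K(u)+N(u)\bigr]^p}{K^{p-1}(u)},
\qquad u\in C^\infty_0(B_R),
\end{equation*}
and then linearizes it with the Young-type inequality $\frac{P^p}{Q^{p-1}}\geq p\,m^{p-1}P-(p-1)m^pQ$, choosing $P=\frac{(p-1)K(u)+N(u)}{p}$, $Q=K(u)$ and $m=1$, which collapses to $L(u)\geq N(u)$. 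What you have done is, in effect, reprove the cited result in the special case needed here: multiplying (\ref{eq101}) by $|u|^p$, integrating by parts with the origin excised, and absorbing $(p-1)|g|^{p'}|u|^p$ by a pointwise Young inequality is precisely the vector-field mechanism underlying Theorem 1 of \cite{FKR15}, with the alignment $g\parallel\nabla\phi$ (valid since $\phi>0$, $w>0$) letting you close the estimate with $L(u)$ rather than $\int|\nabla u|^p$. Your version buys self-containedness and makes explicit the analytic points the paper leaves inside the citation: the flux bound $\int_{\partial B_\varepsilon}|g|\,|u|^p\,dS=O(\varepsilon^{n-\beta})\to0$, which uses $\beta<n$ and the explicit form (\ref{eq004}) of $\phi$, and the local integrability of $|g|^{p'}|u|^p$ (near the origin because $(\beta-1)p'<n$ when $p>n$, near $\partial B_R$ because $u$ has compact support). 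The paper's route is shorter and retains the stronger intermediate inequality (\ref{eq11}); keeping the free parameter $m$ there produces a whole family of linear Hardy inequalities, of which (\ref{eq10}) is just the member $m=1$, whereas your direct argument produces only that member. Both proofs are sound; yours would serve as a self-contained substitute if one did not wish to rely on \cite{FKR15}.
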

\begin{proof}
 According  to Theorem 1 in \cite{FKR15} using the properties of $g$ we obtain
\begin{equation}
\label{eq11}
L(u)\geq\left(\displaystyle\frac{1}{p}\right)^p\frac{[(p-1)K(u)+N(u)]^p}{K^{p-1}(u)},
\ \ u\in C^{\infty}_0(B_R),
\end{equation}

 Applying  the Young inequality
$$
 \frac{P^p}{Q^{p-1}}\geq
pm^{p-1}P-(p-1)m^pQ
$$
with $Q>0$, $P\geq0$ and constant $m\geq0$ to the right-hand-side of
(\ref{eq11}) we get
\begin{equation}
\label{eq13} L(u)\geq(p-1)m^{p-1}(1-k)K(u)+ m^{p-1}N(u).
\end{equation}
In particular, for $m=1$ in (\ref{eq13}) we get a linear form of Hardy inequality (\ref{eq10}).
\end{proof}

\begin{proof}[of Theorem \ref{th1}]
We  choose a function $h(|y|)$  such that the function $\phi$ has a simple form and also the kernel  $N(u)$ has double singularity at $0$ and on $\partial B_R$.

Let us get the function $h(|y|)=\frac{p-\beta}{p-1}(n-\beta)|y|^{-\beta}$, $\beta\in(1,n)$, then (\ref{eq2}) holds and from (\ref{eq3}) we have $\phi(|y|)$ defined in (\ref{eq004}).

The expressions for  $N(u)$  in (\ref{eq9}) is
\begin{equation}
\label{eq15}
\begin{array}{ll}
N(u)&=\left(\frac{p-\beta}{p}\right)^p\int_{B_R}\frac{|u|^p}{|y|^{\frac{(\beta-1)p}{p-1}}
\left|\phi(|y|)\right|^{p-1}}
\\[1pt]
&\times\left(1+\frac{p}{2(p-1)}\frac{1}{\left(1+\ln\frac{\phi(|y|)}{e^{1/x_0}}\right)^2}\right)dy
\\[1pt]
\\
&+\left(\frac{p-\beta}{p}\right)^{p-1}
(n-\beta)\int_{B_R}\frac{|u|^p}{|y|^{\beta}
\left|\phi(|y|)\right|^{p-1}}
\\[1pt]
\\
&\times\left(1-\frac{1}{\ln\frac{\phi(|y|)}{e^{1/x_0}}}-\frac{|b|}{\ln^2\frac{\phi(|y|)}{e^{1/x_0}}}\right)dy.
\end{array}
\end{equation}
Note that condition $\beta\in(1,n)$ gives that $\phi(|y|)\leq1$ for $y\in B_R$. Since $e^{1/x_0}>1$ then $\ln\frac{\phi(|y|)}{e^{1/x_0}}<0$ and the kernels of the right-hand-side of (\ref{eq15}) are continuous for  $y\in B_R$.
\end{proof}

\begin{remark}\rm
\label{rem1}
The inequality (\ref{eq10}) with $N$ in (\ref{eq15}) is better then inequality (23) of Lemma 2 in \cite{KR19b} due to the logarithmic terms.
\end{remark}

\section{Estimates of the first eigenvalue of $p$- Laplacian in the ball}
\label{sec4}
As an application of the new Hardy inequality (\ref{eq10})
in this section we  estimate from below the first eigenvalue of
the p--Laplacian in a bounded smooth domain $\Omega\subset R^n$,  $n\geq2$,
$p>1$, i. e.
$$
\left\{\begin{array}{l} -\hbox{div}(|\nabla
u|^{p-2}\nabla u)=\lambda_{p,n}(\Omega)|u|^{p-2}u \ \ \hbox{ in } \Omega,
\\[1pt]
\\
u=0 \ \ \hbox{ on } \partial \Omega,
\end{array}\right.
$$
 The first eigenvalue
$\lambda_{p,n}(\Omega)$ is defined as
\begin{equation}
\label{eq17} \lambda_{p,n}(\Omega)=\inf_{u\in
W^{1,p}_0(\Omega)}\displaystyle\frac{\displaystyle\int_{\Omega}|\nabla
u|^pdx}{\displaystyle\int_{\Omega}|u|^pdx}.
\end{equation}
and $\lambda_{p,n}(\Omega)$ is simple, i.e., the first eigenfunction $\varphi(x)$
is unique up to multiplication with nonzero constant $C$. Moreover,
$\varphi$ is positive in $\Omega$, $\varphi\in W^{1,p}_0(\Omega)\cap
C^{1,s}(\bar{\Omega})$ for some $s\in(0,1)$, see e. g.
\cite{BK02} and the references therein.

Note that for an arbitrary bounded domain $\Omega\subset R^n$ the Faber--Krahn type inequality gives the estimate
\begin{equation}
\label{eq18} \lambda_{p,n}(\Omega)\geq\lambda_{p,n}(\Omega^{\ast})
\end{equation}
where $\Omega^{\ast}$ is the n--dimensional ball of the same volume as $\Omega$, see \cite{LW97}, \cite{BK02}.

 Thus from (\ref{eq18}) it is enough to prove lower bound of $\lambda_{p,n}$ only for a ball $B_R$.

For this purpose for $n\geq2$, $p>n$, $\beta\in(1,n)$ we define the function
$$
\begin{array}{lll}
&H(p,n,\beta,r)
\\[1pt]
\\
&=\left(\frac{p-\beta}{p}\right)^p\frac{1}{r^{\frac{(\beta-1)p}{p-1}}
\left|\phi(r)\right|^{p-1}}\left(1+\frac{p}{2(p-1)}\frac{1}{\left(1+\ln\frac{\phi(r)}{e^{1/x_0}}\right)^2}\right)
\\[1pt]
\\
&+\left(\frac{p-\beta}{p}\right)^{p-1}
(n-\beta)\frac{1}{r^{\beta}
\left|\phi(r)\right|^{p-1}}\left(1+\frac{1}{\ln\frac{\phi(r)}{e^{1/x_0}}}-\frac{|b|}{\ln^2\frac{\phi(r)}{e^{1/x_0}}}\right).
\end{array}
$$
\begin{theorem}
\label{th2}
For $n\geq 2$, $p>n$, $\beta\in(1,n)$  the estimate
\begin{equation}
\label{eq19}
\lambda_{p,n}(B_R)\geq \sup_{1<\beta<n}\inf_{r\in(0,R)}H(p,n,\beta,r)
\end{equation}
holds.
\end{theorem}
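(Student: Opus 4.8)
The plan is to feed the Hardy inequality of Theorem \ref{th1} into the Rayleigh-quotient definition (\ref{eq17}) of $\lambda_{p,n}$ and then to optimize over the free exponent $\beta$. First I would note that, by the very construction in the proof of Theorem \ref{th1} (the choice $h(|y|)=\frac{p-\beta}{p-1}(n-\beta)|y|^{-\beta}$ leading to $\phi$ in (\ref{eq004}) and to $N(u)$ in (\ref{eq15})), the weight in $N(u)$ is exactly the function $H$ defined just before the theorem, so that
$$
\int_{B_R}|\nabla u|^p\,dy\ \geq\ N(u)\ =\ \int_{B_R}H(p,n,\beta,|y|)\,|u|^p\,dy
$$
for every $u\in C_0^\infty(B_R)$. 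Here one uses that $p>n$ forces $\frac{(\beta-1)p}{p-1}<n$ together with $\beta<n$, so that both singular weights are locally integrable at the origin and $N(u)<\infty$ even for test functions not vanishing at $0$.

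Next I would replace the $|y|$-dependent weight by its infimum. Because the kernels of (\ref{eq15}) are continuous on $B_R$ (as observed after (\ref{eq15}), since $\ln\frac{\phi}{e^{1/x_0}}<0$ there) and blow up to $+\infty$ both as $r\to 0$ and as $r\to R$, the quantity $\inf_{r\in(0,R)}H(p,n,\beta,r)$ is attained at an interior point and is finite; the pointwise bound $H(p,n,\beta,|y|)\geq\inf_{r\in(0,R)}H(p,n,\beta,r)$ then gives
$$
\int_{B_R}|\nabla u|^p\,dy\ \geq\ \Big(\inf_{r\in(0,R)}H(p,n,\beta,r)\Big)\int_{B_R}|u|^p\,dy .
$$
Dividing by $\int_{B_R}|u|^p\,dy$ and taking the infimum over $u\in C_0^\infty(B_R)$, which is dense in $W_0^{1,p}(B_R)$ and hence computes the same Rayleigh quotient as in (\ref{eq17}), yields
$$
\lambda_{p,n}(B_R)\ \geq\ \inf_{r\in(0,R)}H(p,n,\beta,r).
$$

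Finally, since the left-hand side is independent of $\beta$ while the right-hand side is valid for every admissible $\beta\in(1,n)$, I would take the supremum over $\beta$ on the right to obtain precisely (\ref{eq19}). Combined with the Faber--Krahn reduction (\ref{eq18}) this also delivers the bound on a general bounded domain $\Omega$, so the remaining content of Theorem \ref{th2} is entirely the inequality on the ball.

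I expect the only delicate point to be the first step: one must verify that the weight $v$ entering $N(u)$ really is $H(p,n,\beta,\cdot)$ --- in particular that the sign of the middle logarithmic term recorded in the definition of $H$ is consistent with the one in (\ref{eq15}) once $\ln\frac{\phi}{e^{1/x_0}}<0$ is taken into account --- and that $N(u)$ is finite for $C_0^\infty(B_R)$ functions, which is exactly where the hypothesis $p>n$ is used through $\frac{(\beta-1)p}{p-1}<n$. The passage from $C_0^\infty(B_R)$ to $W_0^{1,p}(B_R)$ and the pointwise estimate $H\geq\inf_r H$ are then routine, and no explicit evaluation of the supremum over $\beta$ is needed for the qualitative statement; such an evaluation would only matter if one wished to compare (\ref{eq19}) numerically against (\ref{eq006}).
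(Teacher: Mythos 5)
Your proposal is correct and takes essentially the same route as the paper, whose entire proof is the single observation that replacing $|y|=r$ in (\ref{eq005}) and inserting the resulting weighted inequality into the Rayleigh quotient (\ref{eq17}) gives $\lambda_{p,n}(B_R)\geq\inf_{r\in(0,R)}H(p,n,\beta,r)$ for each fixed $\beta$, after which one takes the supremum over $\beta\in(1,n)$. The extra points you verify --- local integrability of both kernels at the origin when $p>n$ (so the inequality applies to all of $C_0^\infty(B_R)$, not only functions vanishing at $0$) and the sign of the middle logarithmic term, where (\ref{eq005})/(\ref{eq15}) and the displayed definition of $H$ are indeed mutually inconsistent as printed --- are details the paper's one-line proof passes over silently.
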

\begin{proof}
Replacing $|y|=r$ in (\ref{eq005}) and using (\ref{eq17}), we get (\ref{eq19}).
\end{proof}

Note that in the case $p>n$ the estimate (\ref{eq19}) is better than the estimate (\ref{eq006}) for $p>n$ and is better then the the estimate
$$
\lambda_{p,n}(B_R)\geq \inf_{r\in(0,R)}H(p,n,n,r)
$$
obtained in \cite{FKR15a}.

\textbf{Acknowledgement}
\small{The work is  partially supported by   the Grant No BG05M2OP001--1.001--0003, financed by the Science and Education for Smart Growth Operational Program (2014-2020) in Bulgaria and co-financed by the European Union through the European Structural and Investment Funds.}

\begin{flushleft}
Institute of Mathematics and  Informatics,\\ Bulgarian Academy
of Sciences \\ Acad. G. Bonchev str.,bl. 8\\
Sofia 1113, Bulgaria, \\ E-mail: kutev@math.bas.bg; rangelov@math.bas.bg,
\end{flushleft}

\end{document}